\g@addto@macro{\endabstract}{\@setabstract}
\newcommand{\authorfootnotes}{\renewcommand\thefootnote{\@fnsymbol\c@footnote}}%
\renewcommand{\leq}{\leqslant}
\makeatletter \numberwithin{equation}{section}
\numberwithin{figure}{section} 
\theoremstyle{plain}
\newtheorem{thm}{Theorem}[section]
\newtheorem{lem}[thm]{Lemma} 
\theoremstyle{Definition}
\newtheorem{rem}[thm]{Remark}
\begin{document}

\begin{center}
  \LARGE
 On the approximation properties of bivariate $(p, q)-$Bernstein operators (Revised)\par \bigskip

  \par
\bigskip

\normalsize
\authorfootnotes
Ali Karaisa\footnote{Corresponding author: Tel:+90 332 323 8220;
fax:+90 332 323 8245 E-mail:akaraisa@hotmail.com}

\par \bigskip

Department of Mathematics--Computer Sciences, Faculty of Sciences,
Necmettin Erbakan University, 42090 Meram, Konya, Turkey

\par
  \par \bigskip

\end{center}

\begin{abstract}
In the present study, we have given a corrigendum to our paper on
the approximation properties of bivariate $(p, q)-$Bernstein
operators. Recently, we \cite{kar} have defined the bivariate $(p,
q)-$Bernstein operators. Later, we have  aware of  Acar  et. al
\cite{acar} already have given some moments. In this case, we have
revised \cite[Lemma 2.3]{kar}.

\textbf{Keywords:bivariate $(p, q)-$Bernstein operator, Voronovskaja
type theorem, $(p,q)-$integer}

\textbf{MSC:}41A25, 41A36
\end{abstract}

\section{Introduction}

Approximation theory has been used in the theory of approximation of
continuous functions by means of sequences of positive linear
operators and still remains as a very active area of research. Since
Korovkin's famous theorem in 1950, the study of the linear methods
of approximation given by sequences of positive and linear operators
has became a firmly entrenched part of approximation theory.

During the last two decades, the applications of $q-$calculus have
emerged as a new area in the field of approximation theory. The
first $q-$analogue of the well-known Bernstein polynomials was
introduced by Lupa\c{s} \cite{10} by applying the idea of
$q-$integers. Since approximation studied by $q-$Bernstein
polynomials is better than classical one under convenient choice of
$q$, many authors introduced $q$-generalization of various operators
and investigated several approximation properties we refer the
readers to \cite {1,2,3}.

Recently, Mursaleen et al. used $(p, q)$-calculus in approximation
theory and defined  $(p, q)-$analogue of Bernstein operators
\cite{11}. They estimated uniform convergence of the operators and
rate of convergence, obtained Voronovskaya theorem as well. Also,
$(p, q)-$analogue of Bernstein-Stancu operators and
Bleimann-Butzer-Hahn operators were introduced in \cite{12} and
\cite{13}, respectively.  For some recent works devoted to $(p,
q)$-operators, we can refer the readers to \cite {p0,p1,p2,p3,p4}.

In the present study, we define the bivariate Bernstein  operators
based on $(p, q)-$integer. We examine  approximation properties of
our new operator by the help of Korovkin-type theorem . Further, we
present the local approximation properties and establish the rates
of  convergence by means of the modulus of continuity and the
Lipschitz type maximal function. Also, we present a Voronovskaja
type asymptotic formula for this operators.

Let us recall some definitions and notations regarding the concept
of $(p,q)-$calculus.

The $(p,q)-$integer of the number $n$ is defined by
\begin{equation*}
\left[n\right]_{p,q}:=\frac{p^{n}-q^{n}}{p-q},\,\,\,n=1,2,3\ldots,\,\,\,\,0<q<p\leq1.
\end{equation*}
The $(p,q)-$factorial $\left[ n\right] _{p,q}!$ and the
$(p,q)-$binomial coefficients are  defined as :
\begin{equation*}
\left[ n\right] _{p,q}!:=\left \{
\begin{tabular}{ll}
$\left[ n\right] _{p,q}\left[ n-1\right] _{p,q}\cdots \left[
1\right] _{p,q},$ & $
n\in \mathbb{N}$ \\
$1,$ & $n=0$
\end{tabular}
\  \right. .
\end{equation*}
and
\begin{equation*}
\left[
\begin{array}{c}
n \\
k
\end{array}
\right] _{p,q}=\frac{\left[ n\right] _{p,q}!}{\left[ k\right] _{p,q}!\left[ n-k%
\right] _{p,q}!},0\leq k\leq n.
\end{equation*}

Further, the $(p, q)-$binomial expansions are given as
\begin{equation*}
(ax+by)_{p,q}^{n}=\sum^{n}_{k=0}p^{\binom {n-k} {2}}q^{\binom {k}
{2}}a^{n-k}b^{k}x^{n-k}y^{k}.
\end{equation*}
and
\begin{equation*}
(x-y)_{p,q}^{n}=(x-y)(px-qy)(p^{2}x-q^{2}y)\cdots(p^{n-1}x-q^{n-1}y).
\end{equation*}

Further information related to $(p,q)-$calculus can be found in
\cite{14,15}.

\section{Construction of the operators}
Recently,  Mursaleen et al applied $(p, q)-$calculus in
approximation theory and introduced revised $(p, q)-$analogue of
Bernstein operators as follows;

\begin{equation}
B_{n,p,q}\left(
f;x\right)=\frac{1}{p^{n(n-1)/2}}\sum_{k=0}^{n}\left[
\begin{array}{c}
n \\
k
\end{array}
\right]
_{p,q}p^{k(k-1)/2}x^{k}\prod_{s=0}^{n-k-1}\left(p^{s}-q^{s}\frac{x}{b_{n}}\right)f\left(
\frac{\left[ k\right] _{p,q}}{\left[ n\right] _{p,q}
p^{k-n}}\right), \label{A}
\end{equation}
On the other hand, another active research area in approximation
theory is to approximate the bivariate functions. For example,
Barbosu, \cite{bar} defined and studied  bivariate Bernstein
operator. B\"{u}y\"{u}kyaz{\i}c{\i}\cite{by}  introduced
$q-$Bernstein Chlodowsky operator.

Now, we define bivariate Bernstein operator based on
$(p,q)-$integers. Let $I = [0, 1]\times[0, 1]$, $f :I\longrightarrow
R$ and $0 < q_{1}, q_{2} < p_{1}, p_{2} \leq1$. We define the
bivariate extension of the $(p, q)-$Bernstein operator operators as
follows:

\begin{equation}
B^{(p_{1},q_{1}),(p_{1},q_{1})}_{n,m}\left(
f;x,y\right)=\sum_{k=0}^{n}\sum_{j=0}^{m}R_{n,k}(p_{1},q_{1};x)R_{m,j}(p_{2},q_{2};y)
f\left( \frac{\left[ k\right] _{p_{1},q_{1}}}{\left[ n\right]
_{p_{1},q_{1}} p_{1}^{k-n}},\frac{\left[ j\right]
_{p_{2},q_{2}}}{\left[ m\right] _{p_{2},q_{2}} p_{2}^{j-m}}\right),
\label{A}
\end{equation}
where
\begin{equation*}\label{1}
R_{n,k}(p_{1},q_{1};x)=p_{1}^{\frac{k(k-1)-n(n-1)}{2}} \left[
\begin{array}{c}
n \\
k
\end{array}
\right]
_{p_{1},q_{1}}x^{k}\prod_{s=0}^{n-k-1}\left(p_{1}^{s}-q_{1}^{s}x\right)
\end{equation*}

\begin{lem}\cite[Lemma 1]{5}\label{lemma1}

\begin{eqnarray*}
B_{n,p,q}\left( e^{0};x\right)& =&1,\label{3}\\
B_{n,p,q}\left( e_{1};x\right)
&=&x,\label{4}\\
B_{n,p,q}\left( e_{2};x\right)&
=&\frac{p^{n-1}}{[n]_{p,q}}x+\frac{q[n-1]_{p,q}}{[n]_{p,q}}x^{2}
\label{5}.\\
\end{eqnarray*}
\end{lem}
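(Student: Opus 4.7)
The plan is to derive all three identities by direct manipulation of the sum defining $B_{n,p,q}$, relying only on the $(p,q)$-binomial expansion recalled in the introduction together with the shift relation
$$[k]_{p,q}\binom{n}{k}_{p,q}=[n]_{p,q}\binom{n-1}{k-1}_{p,q},$$
which converts each moment into a lower-order expression.

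First, the identity $B_{n,p,q}(e^{0};x)=1$ reduces to showing
$$\sum_{k=0}^{n}\binom{n}{k}_{p,q}p^{k(k-1)/2}x^{k}\prod_{s=0}^{n-k-1}(p^{s}-q^{s}x)=p^{n(n-1)/2},$$
which is the $(p,q)$-analogue of $(x+(1-x))^{n}=1$ and follows from the factorisation $(x-y)_{p,q}^{n}=\prod_{s=0}^{n-1}(p^{s}x-q^{s}y)$ after specialising the parameters.

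For $B_{n,p,q}(e_{1};x)=x$, I would substitute the node $\frac{[k]_{p,q}p^{n-k}}{[n]_{p,q}}$, drop the $k=0$ term, apply the shift identity above to replace $[k]_{p,q}\binom{n}{k}_{p,q}$ by $[n]_{p,q}\binom{n-1}{k-1}_{p,q}$, and re-index $k\mapsto k+1$. After reconciling the powers of $p$ contributed by $p^{k(k-1)/2}$ and by $p^{n-k}$, the residual sum is exactly the normalisation for $B_{n-1,p,q}(e^{0};x)$, so the whole expression collapses to $x$.

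For $B_{n,p,q}(e_{2};x)$ the same device is applied twice. I would expand the squared node and use the recursion $[k]_{p,q}=p^{k-1}+q[k-1]_{p,q}$ to decompose $[k]_{p,q}^{2}=p^{k-1}[k]_{p,q}+q[k]_{p,q}[k-1]_{p,q}$. The first piece requires a single application of the shift identity and collapses to a term of order $x$ with coefficient $\frac{p^{n-1}}{[n]_{p,q}}$; the second piece requires two successive shifts and collapses to a term of order $x^{2}$ with coefficient $\frac{q[n-1]_{p,q}}{[n]_{p,q}}$. The main obstacle throughout is the bookkeeping of the powers of $p$: the exponent $p^{k(k-1)/2}$, the global factor $p^{-n(n-1)/2}$, the node factor $p^{n-k}$, and the length of the truncated product $\prod_{s=0}^{n-k-1}(p^{s}-q^{s}x)$ each shift in a slightly different way under $k\mapsto k+1$, and the computation only closes up cleanly once a consistent re-indexing convention is fixed.
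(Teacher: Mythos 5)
Your argument is correct: the shift identity $[k]_{p,q}\left[\begin{smallmatrix}n\\k\end{smallmatrix}\right]_{p,q}=[n]_{p,q}\left[\begin{smallmatrix}n-1\\k-1\end{smallmatrix}\right]_{p,q}$, the decomposition via $[k]_{p,q}=p^{k-1}+q[k-1]_{p,q}$, the re-indexing, and the collapse onto the normalisation identity all check out, and the powers of $p$ do cancel as you claim (e.g.\ $p^{2n-2}p^{(n-1)(n-2)/2}/p^{n(n-1)/2}=p^{n-1}$ for the linear term of $e_2$). The paper does not prove this lemma itself but cites it from Mursaleen et al.; your method is essentially identical to the one the paper uses for the higher moments $e_3,e_4$ in its Lemma 2.2, where the index is shifted first and $[k+1]_{p,q}^{2}=p^{2k}+2qp^{k}[k]_{p,q}+q^{2}[k]_{p,q}^{2}$ is expanded afterwards --- a trivial reordering of your steps.
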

Also, $(p, q)-$Bernstein operator satisfy following equations:
\begin{lem}\label{lemma2}
\begin{eqnarray*}
B_{n,p,q}\left( e_{3};x\right)&
=&\frac{p^{2n-2}}{[n]^{2}_{p,q}}x+\frac{p^{n-1}(2p+q)q[n-1]_{p,q}}{[n]^{2}_{p,q}}x^{2}
+\frac{q^{3}[n-1]_{p,q}[n-2]_{p,q}}{[ n]^{2} _{p,q}}x^{3} \label{6},\\
B_{n,p,q}\left( e_{4};x\right)& =&\frac{p^{3n-3}}{[n]^{3}_{p,q}}+\frac{q(3p^{2}+3qp+q^{3})[n-1]_{p,q}p^{2n-4}}{[n]^{3}_{p,q}}x^{2}\\
&&+\frac{q^{3}(3p^{2}+2pq+q^{2})[n-1]_{p,q}[n-2]_{p,q}p^{n-3}}{[n]^{3}_{p,q}}x^{3}+\frac{q^{6}[n-1]_{p,q}[n-2]_{p,q}[n-3]_{p,q}x^{4}}{[n]^{3}_{p,q}}
\label{6},
\end{eqnarray*} where
 $e_i(x)=x^i$, $i=0, 1, 2,3,4$.
\end{lem}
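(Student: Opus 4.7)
The plan is to follow the same technique Mursaleen et al.\ used in \cite{11} to establish the moments through $e_{2}$ recorded in Lemma~\ref{lemma1}. Two combinatorial identities carry the whole computation. The first is the recurrence $[k]_{p,q}=p^{k-1}+q[k-1]_{p,q}$, which is immediate from $[k]_{p,q}=(p^{k}-q^{k})/(p-q)$. The second is the falling-factorial collapse
\begin{equation*}
[k]_{p,q}[k-1]_{p,q}\cdots [k-j+1]_{p,q}\left[\begin{array}{c} n \\ k \end{array}\right]_{p,q}=[n]_{p,q}[n-1]_{p,q}\cdots [n-j+1]_{p,q}\left[\begin{array}{c} n-j \\ k-j \end{array}\right]_{p,q},
\end{equation*}
verified by cancelling $(p,q)$-factorials on both sides.

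For $B_{n,p,q}(e_{3};x)$, I would first apply the recurrence iteratively to obtain the expansion
\begin{equation*}
[k]_{p,q}^{3}=p^{2k-2}[k]_{p,q}+p^{k-2}q(2p+q)[k]_{p,q}[k-1]_{p,q}+q^{3}[k]_{p,q}[k-1]_{p,q}[k-2]_{p,q},
\end{equation*}
then insert it into the sum defining $B_{n,p,q}(e_{3};x)$ and invoke the collapse identity on each term to pull out $[n]_{p,q}$, $[n]_{p,q}[n-1]_{p,q}$, and $[n]_{p,q}[n-1]_{p,q}[n-2]_{p,q}$ respectively. After reindexing $k\mapsto k+j$ in each of the three resulting sums and matching the powers of $p$ coming from $p^{k(k-1)/2}$, $p^{3(n-k)}$ and the product $\prod_{s=0}^{n-k-1}(p^{s}-q^{s}x)$, each sum is recognised either as a $(p,q)$-binomial expansion or as $B_{n-j,p,q}(e_{0};\,\cdot)$ at a rescaled argument, and therefore equals $1$ up to an explicit prefactor. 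Collecting the three contributions in powers of $x$ yields the stated formula.

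The argument for $B_{n,p,q}(e_{4};x)$ is identical in structure: one further application of the recurrence produces an expansion of $[k]_{p,q}^{4}$ as a linear combination of the four falling products $[k]_{p,q}$, $[k]_{p,q}[k-1]_{p,q}$, $[k]_{p,q}[k-1]_{p,q}[k-2]_{p,q}$ and $[k]_{p,q}[k-1]_{p,q}[k-2]_{p,q}[k-3]_{p,q}$, with coefficients that are explicit polynomials in $p^{k}$ and $q$. The collapse identity then converts the expression into four index-shifted sums, each of which simplifies to an explicit constant. The main obstacle is not conceptual but a matter of bookkeeping: tracking exponents of $p$ contributed by $p^{k(k-1)/2}$, by $p^{r(n-k)}$ coming from evaluating $e_{r}(x_{n,k})$, and by the index shift simultaneously is error-prone. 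My strategy is to handle each falling-product contribution separately and combine only at the end, double-checking by sending $p\to 1$ to recover the known $q$-Bernstein moments.
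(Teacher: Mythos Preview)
Your proposal is correct and uses the same two core identities as the paper---the recurrence $[k]_{p,q}=p^{k-1}+q[k-1]_{p,q}$ and the binomial collapse---but you organise the computation differently. The paper performs a single index shift (your collapse identity with $j=1$) to bring $[k+1]_{p,q}^{r-1}$ into the sum, then expands that power binomially via $[k+1]_{p,q}=p^{k}+q[k]_{p,q}$, obtaining terms in $[k]_{p,q}^{j}$ which are processed by a further shift (and, implicitly, by the lower-moment computations already done). You instead front-load the algebra: fully resolve $[k]_{p,q}^{r}$ into a linear combination of $(p,q)$-falling products before touching the sum, so that each piece collapses in a single application of the identity. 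Your route keeps each moment self-contained and makes the coefficient of every power of $x$ visible from the outset; the paper's nested route reuses earlier work and so introduces less new algebra at each stage. Both lead to the same bookkeeping burden in the end, and your $p\to 1$ sanity check against the $q$-Bernstein moments is a sensible guard against exponent slips.
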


\begin{proof}
Let us we compute $e_{3}$
\begin{eqnarray*}
B_{n,p,q}\left(
e_{3};x\right)&=&\frac{1}{p^{n(n-7)/2}}\sum_{k=0}^{n}\left[
\begin{array}{c}
n \\
k
\end{array}\right]_{p,q}p^{k(k-7)/2}x^{k}\prod_{s=0}^{n-k-1}\left(p^{s}-q^{s}x\right)
\frac{[ k]^{3} _{p,q}}{\left[ n\right]^{3} _{p,q} }\\
&=&\frac{1}{p^{n(n-7)/2}\left[ n\right]^{2}
_{p,q}}\sum_{k=0}^{n-1}\left[
\begin{array}{c}
n-1 \\
k
\end{array}\right]_{p,q}p^{(k+1)(k-6)/2} [k+1]^{2}_{p,q}
x^{k+1}\prod_{s=0}^{n-k-2}\left(p^{s}-q^{s}x\right).
\end{eqnarray*}
By $[k+1]^{2}_{p,q}=p^{2k}+2qp^{k}[k]_{p,q}+q^{2}[k]^{2}_{p,q}$, we
have
\begin{eqnarray*}
&=&\frac{1}{p^{n(n-7)/2}[ n]^{2} _{p,q} }\sum_{k=0}^{n-1}\left[
\begin{array}{c}
n-1 \\
k
\end{array}\right]_{p,q}
p^{(k^{2}-k-6)/2}x^{k+1}\prod_{s=0}^{n-k-2}\left(p^{s}-q^{s}x\right)\\
&&+2q\frac{1}{p^{n(n-7)/2}[ n]^{2} _{p,q} }\sum_{k=0}^{n-1}\left[
\begin{array}{c}
n-1 \\
k
\end{array}\right]_{p,q}[k]_{p,q}
p^{(k^{2}-3k-6)/2}x^{k+1}\prod_{s=0}^{n-k-2}\left(p^{s}-q^{s}x\right)\\
&&+\frac{q^{2}}{p^{n(n-7)/2}[ n]^{2} _{p,q} }\sum_{k=0}^{n-1}\left[
\begin{array}{c}
n-1 \\
k
\end{array}\right]_{p,q}[k]^{2}_{p,q}
p^{(k+1)(k-6)/2}x^{k+1}\prod_{s=0}^{n-k-2}\left(p^{s}-q^{s}x\right)\\
&=&\frac{x}{[n]^{2}_{p,q}}p^{2n-2}+\frac{2q[n-1]_{p,q}x^{2}}{[n]^{2}_{p,q}}p^{n-1}
+\frac{p^{n-2}q^{2}[n-1]_{p,q}x^{2}}{[ n]^{2} _{p,q}}+\frac{q^{3}[n-1]_{p,q}[n-2]_{p,q}x^{3}}{[ n]^{2} _{p,q}}\\
&=&\frac{x}{[n]^{2}_{p,q}}p^{2n-2}+\frac{(2p+q)q[n-1]_{p,q}x^{2}}{[n]^{2}_{p,q}}p^{n-2}
+\frac{q^{3}[n-1]_{p,q}[n-2]_{p,q}x^{3}}{[ n]^{2} _{p,q}}.
\end{eqnarray*}
Finally,

\begin{eqnarray*}
B_{n,p,q}\left(
e_{4};x\right)&=&\frac{1}{p^{n(n-9)/2}}\sum_{k=0}^{n}\left[
\begin{array}{c}
n \\
k
\end{array}\right]_{p,q}x^{k}\prod_{s=0}^{n-k-1}\left(p^{s}-q^{s}x\right)
p^{k(k-1)/2}p^{-4k}\frac{[ k]^{4} _{p,q}}{\left[ n\right]^{4} _{p,q} }\\
&=&\frac{b^{4}_{n}}{p^{n(n-9)/2}[ n]^{3} _{p,q}
}\sum_{k=0}^{n-1}\left[
\begin{array}{c}
n-1 \\
k
\end{array}\right]_{p,q} p^{(k+1)(k-8)/2}[k+1]^{3}_{p,q}
x^{k+1}\prod_{s=0}^{n-k-2}\left(p^{s}-q^{s}x\right).
\end{eqnarray*}
Using the fact
$[k+1]^{3}_{p,q}=p^{3k}+3p^{2k}q[k]_{p,q}+3p^{k}q^{2}[k]^{2}_{p,q}+q^{3}[k]^{3}_{p,q}$,
we obtain

\begin{eqnarray*}
B_{n,p,q}\left( e_{4};x\right)&=&\frac{p^{3n-3}}{p^{(n-1)(n-2)/2}[
n]^{3} _{p,q}}\sum_{k=0}^{n-1}\left[
\begin{array}{c}
n-1 \\
k
\end{array}\right]_{p,q}p^{k(k-1)/2}x^{k+1}\prod_{s=0}^{n-k-2}\left(p^{s}-q^{s}x\right)\\
&&+\frac{3qb^{4}_{n}}{p^{n(n-9)/2}[ n]^{3}
_{p,q}}\sum_{k=0}^{n-1}\left[
\begin{array}{c}
n-1 \\
k
\end{array}\right]_{p,q}[k]_{p,q}p^{(k^{2}-3k-8)/2}x^{k+1}\prod_{s=0}^{n-k-2}\left(p^{s}-q^{s}x\right)\\
&&+\frac{3q^{2}}{p^{n(n-9)/2}[ n]^{3} _{p,q}}\sum_{k=0}^{n-1}\left[
\begin{array}{c}
n-1 \\
k
\end{array}\right]_{p,q}[k]^{2}_{p,q}p^{(k^{2}-5k-8)/2}x^{k+1}\prod_{s=0}^{n-k-2}\left(p^{s}-q^{s}x\right)\\
&&+\frac{q^{3}}{p^{n(n-9)/2}[ n]^{3} _{p,q}}\sum_{k=0}^{n-1}\left[
\begin{array}{c}
n-1 \\
k
\end{array}\right]_{p,q}[k]^{3}_{p,q} p^{(k+1)(k-8)/2}x^{k+1}\prod_{s=0}^{n-k-2}\left(p^{s}-q^{s}x\right)\\
&=&\frac{x}{[n]^{3}_{p,q}}p^{3n-3}+\frac{q(3p^{2}+3qp+q^{2})[n-1]_{p,q}x^{2}}{[n]^{3}_{p,q}}p^{2n-4}\\
&&+\frac{q^{3}(3p^{2}+2pq+q^{2})[n-1]_{p,q}[n-2]_{p,q}x^{3}}{[n]^{3}_{p,q}}p^{n-3}+\frac{q^{6}[n-1]_{p,q}[n-2]_{p,q}[n-3]_{p,q}x^{4}}{[n]^{3}_{p,q}}.
\end{eqnarray*}

Acar  et. al  \cite{acar} introduced Kantotovich modifications of
$(p, q)-$Bernstein operators for bivariate functions using a new
$(p,q)-$integral and given following moments for bivariate
$(p,q)-$Bernstein operators.
\begin{lem}\cite[Lemma 1]{acar}\label{lemx}
\begin{eqnarray*}
B^{(p_{1},q_{1}),(p_{2},q_{2})}_{n,m}\left(1;x,y\right)& =&1,\label{1a}\\
B^{(p_{1},q_{1}),(p_{2},q_{2})}_{n,m}\left( s;x,y\right)&=&x,\label{2a}\\
B^{(p_{1},q_{1}),(p_{2},q_{2})}_{n,m}\left( t;x,y\right)&=&y,\label{3a}\\
B^{(p_{1},q_{1}),(p_{2},q_{2})}_{n,m}\left(st;x,y\right)&=&x y,\label{4a}\\
B^{(p_{1},q_{1}),(p_{2},q_{2})}_{n,m}\left( s^{2};x,y\right)&
=&\frac{p_{1}^{n-1}}{[n]_{p_{1},q_{1}}}x+\frac{q_{1}[n-1]_{p_{1},q_{1}}}{[n]_{p_{1},q_{1}}}x^{2}
\label{5a},\\
B^{(p_{1},q_{1}),(p_{2},q_{2})}_{n,m}\left( t^{2};x,y\right)&
=&\frac{p_{2}^{m-1}}{[m]_{p_{2},q_{2}}}y+\frac{q_{2}[m-1]_{p_{2},q_{2}}}{[n]_{p_{2},q_{2}}}y^{2}
\label{6a}.\\
\end{eqnarray*}
\end{lem}

Using Lemma \ref{lemx} and by linearity of
$B^{(p_{1},q_{1}),(p_{2},q_{2})}_{n,m}$, we have

\begin{rem}
\begin{eqnarray}
B^{(p_{1},q_{1}),(p_{2},q_{2})}_{n,m}\left((t-x)^{2};x,y\right)&
=&\frac{p_{1}^{n-1}}{[n]_{p_{1},q_{1}}}(x-x^{2}),\label{moment1}\\
B^{(p_{1},q_{1}),(p_{2},q_{2})}_{n,m}\left((s-y)^{2};x,y\right)&
=&\frac{p_{2}^{m-1}}{[m]_{p_{2},q_{2}}}(y-y^{2}).\label{moment2}
\end{eqnarray}

\end{rem}

The Korovkin-type theorem for functions of two variables was proved
by Volkov \cite{volkov}.

\begin{thm}
Let $q_{1}:=(q_{1,n})$, $p_{1}:= (p_{1,n})$, $q_{2}:=(q_{m,2})$,
$p_{2}:=(p_{m,2})$ such that
$0<q_{1,n},q_{2,m}<p_{n,1},p_{m,2}\leq1$. If
\begin{eqnarray}\label{convergent}
\lim_n p_{n,1}=1,\,\, \lim_n q_{n,1}=1,\,\,\lim_m p_{m,2}=1,\,\,
\lim_m q_{m,2}=1,\,\,\lim_n
p^{n}_{n,1}=a_{1}\,\,\textrm{and}\,\,\lim_m p^{m}_{m,1}=a_{2},
\end{eqnarray}
the sequence
$B^{(p_{1},q_{1}),(p_{2},q_{2})}_{n,m}\left(f;x,y\right)$
convergence uniformly to $f(x,y)$, on $[0,1]\times[0,1]=[0,1]^{2}$
for each $f\in C\left([0,1]^{2}\right)$, where $a_{1},a_{2}$ be reel
numbers and  $C([0,1]^{2})$ be the space of all real valued
continuous function on $[0,1]^{2}$ with the norm
\begin{eqnarray*}
\parallel f\parallel_{C([0,1]^{2})}=\sup_{(x,y)\in
[0,1]^{2}}\left|f(x,y)\right|.
\end{eqnarray*}
\end{thm}

\begin{proof}
Assume that  the equities \eqref{convergent} are holds. Then, we
have
\begin{eqnarray*}
\frac{p_{1,n}^{n-1}}{\left[n\right]_{p_{1,n},q_{1,n}}}\rightarrow
0,\, \frac{p_{2}^{m-1}}{\left[m\right]_{p_{2,m},q_{2,m}}}\rightarrow
0,\,\frac{q_{1,n}\left[n\right]_{p_{1,n},q_{1,n}}}{\left[n\right]_{p_{1,n},q_{1,n}}}\rightarrow
1\,\textrm{and}\,\,\frac{q_{2,m}\left[m-1\right]_{p_{2,m},q_{2,m}}}{\left[m\right]_{p_{2,m},q_{2,m}}}\rightarrow1.
\end{eqnarray*}

From Lemma \ref{lemx}, we obtain  $\lim_{n,m \to \infty}
B^{(p_{1},q_{1}),(p_{2},q_{2})}_{n,m}\left(e_{ij};x,y\right) =
e_{ij}(x, y)$ uniformly on $[0, 1]^{2}$, where $e_{ij}(x, y) =
x_{i}y_{j}, 0 \leq i + j \leq 2$ are the test functions. By using
Korovkin theorem for functions of two variables was  presented by
Volkov \cite{volkov}, it follows that
$\lim_{n,m\to\infty}B^{(p_{1},q_{1}),(p_{2},q_{2})}_{n,m}\left(f;x,y\right)
= f(x, y)$, uniformly on $[0, 1]^{2}$, for each $f \in
C([0,1]^{2})$.
\end{proof}

\section{Rate of Convergence}

In this section, we compute the rates of convergence of operators
$B^{(p_{1},q_{1}),(p_{2},q_{2})}_{n,m}$ to $f (x, y)$ by means of
the modulus of continuity. Proceeding further, we  provide a summary
of the notations and definitions of the modulus of continuity and
the Peetre's $K-$functional for bivariate real valued functions.

For $f \in C( [0,1]^{2})$, the complete modulus of continuity for a
bivariate case is defined as follows:

\begin{equation*}
\omega(f,\delta)=\sup\left\{|f(t,s)-f(x,y)|:\sqrt{(t-x)^{2}+(s-y)^{2}}\leq\delta\right\}.
\end{equation*}
for every $(t, s),(x, y)\in [0,1]^{2}$. Further, partial moduli of
continuity with respect to $x$ and $y$ are defined as

\begin{eqnarray*}
\omega^{1}(f,\delta)&=&\sup\left\{|f(x_{1},y)-f(x_{2},y)|:y\in[0,1]\,\,\textrm{and}\,\,|x_{1}-x_{2}|\leq\delta\right\}\\
\omega^{2}(f,\delta)&=&\sup\left\{|f(x,y_{1})-f(x,y_{2})|:x\in[0,1]\,\,\textrm{and}\,\,|y_{1}-y_{2}|\leq\delta\right\},
\end{eqnarray*}

It is obvious that they satisfy the properties of the usual modulus
of continuity \cite{ans}.

For $\delta> 0$, the Peetre-K functional \cite{peetre} is given by
\begin{eqnarray*}
K(f,\delta)=\inf_{g\in
C^{2}[0,1]^{2}}\left\{\|f-g\|_{C[0,1]^{2}}+\delta\|g\|_{C^{2}[0,1]^{2}}\right\},
\end{eqnarray*}
where $C^{2}[0,1]^{2}$ is the space of functions of $f$ such that
$f$, $\frac{\partial^{j}f}{\partial x^{j}}$ and
$\frac{\partial^{j}f}{\partial y^{j}}$ $(j=1,2)$ in $C[0,1]^{2}$.
The norm $\|.\|$ on the space  $C^{2}[0,1]^{2}$ is defined by

\begin{eqnarray*}
\|f\|_{C^{2}[0,1]^{2}}=\|f\|_{C[0,1]^{2}}+\sum_{j=1}^{2}\left(\left\|\frac{\partial^{j}f}{\partial
y^{j}}\right\|_{C[0,1]^{2}}+\left\|\frac{\partial^{j}f}{\partial
y^{j}}\right\|_{C[0,1]^{2}}\right).
\end{eqnarray*}

 Now, we give an estimate of the rate of
convergence of operators $B^{(p_{1},q_{1}),(p_{2},q_{2})}_{n,m}$.

\begin{thm}
\label{thorem3.1}Let $f\in C\left([ 0,1]^{2} \right) $. For all
 $x\in [0,1]^{2}$, we have
\begin{equation*}
\left\vert B^{(p_{1},q_{1}),(p_{2},q_{2})}_{n,m} -f\left(x, y\right)
\right\vert \leq 2\omega\left( f;\delta_{n,m} \right) ,
\label{10.2}
\end{equation*}
where
\begin{eqnarray*}
\delta_{n,m}^2&=&\frac{p_{1}^{n-1}}{[n]_{p_{1},q_{1}}}(x-x^{2})+\frac{p_{2}^{m-1}}{[m]_{p_{2},q_{2}}}(y-y^{2}).
\end{eqnarray*}
\end{thm}

\begin{proof}
By definition  the complete modulus of continuity of $f(x, y)$ and
linearity and positivity our operator, we can write
\begin{eqnarray*}
|B^{(p_{1},q_{1}),(p_{2},q_{2})}_{n,m}\left(f;x,y\right)-f(x,y)|&\leq&
B^{(p_{1},q_{1}),(p_{2},q_{2})}_{n,m}\left(|f(t,s)-f(x,y)|;x,y\right)\\
&\leq&B^{(p_{1},q_{1}),(p_{2},q_{2})}_{n,m}\left(\omega\left(f;\sqrt{(t-x)^{2}+(s-y)^{2}}\right);x,y\right)\\
&\leq&\omega(f,\delta_{n,m})\left[\frac{1}{\delta_{n,m}}B^{(p_{1},q_{1}),(p_{2},q_{2})}_{n,m}\left(\sqrt{(t-x)^{2}+(s-y)^{2}};x,y\right)\right].
\end{eqnarray*}
Using Cauchy-Scwartz inequality, from \eqref{moment1} and
\eqref{moment2}, one can write following
\begin{eqnarray*}
&&|B^{(p_{1},q_{1}),(p_{2},q_{2})}_{n,m}\left(f;x,y\right)-f(x,y)|\\
&\leq& \omega(f,\delta_{n,m})
\left[1+\frac{1}{\delta_{n,m}}\left\{B^{(p_{1},q_{1}),(p_{2},q_{2})}_{n,m}\left((t-x)^{2}+(s-y)^{2};x,y\right)\right\}^{1/2}\right]\\
&=&\omega(f,\delta_{n,m})
\left[1+\frac{1}{\delta_{n,m}}\left\{B^{(p_{1},q_{1}),(p_{2},q_{2})}_{n,m}\left((t-x)^{2};x,y\right)
+B^{(p_{1},q_{1}),(p_{2},q_{2})}_{n,m}\left((s-y)^{2};x,y\right)\right\}^{1/2}\right]\\
&=&\omega(f,\delta_{n,m})
\left[1+\frac{1}{\delta_{n,m}}\left(\frac{p_{1}^{n-1}}{[n]_{p_{1},q_{1}}}(x-x^{2})+\frac{p_{2}^{m-1}}{[m]_{p_{2},q_{2}}}(y-y^{2})\right)^{1/2}\right].
\end{eqnarray*}
Choosing  $\delta_{ n,m} =
\left(\frac{p_{1}^{n-1}}{[n]_{p_{1},q_{1}}}(x-x^{2})+\frac{p_{2}^{m-1}}{[m]_{p_{2},q_{2}}}(y-y^{2})\right)^{1/2}$,
for all $(x, y)\in[0,1]^{2}$, we get desired the result.

\end{proof}

\begin{thm}
Let $f\in C\left([ 0,1]^{2} \right) $, then the following
inequalities satisfy
\begin{equation*}
\left\vert B^{(p_{1},q_{1}),(p_{2},q_{2})}_{n,m} -f\left(x, y\right)
\right\vert \leq \omega^{1}\left( f;\delta_{n} \right)+\omega^{2
}\left( f;\delta_{m} \right),
\end{equation*}
where
\begin{eqnarray}
\delta_{n}^2&=&\frac{p_{1}^{n-1}}{[n]_{p_{1},q_{1}}}(x-x^{2}),\label{lip1}\\
\delta_{m}^2&=&\frac{p_{2}^{m-1}}{[m]_{p_{2},q_{2}}}(y-y^{2}).\label{lip2}.
\end{eqnarray}
\end{thm}

\begin{proof}
By definition partial moduli of continuity of $f(x,y)$ and appliying
Cauchy-Scwartz inequality, we have
\begin{eqnarray*}
|B^{(p_{1},q_{1}),(p_{2},q_{2})}_{n,m}\left(f;x,y\right)-f(x,y)|&\leq&
B^{(p_{1},q_{1}),(p_{2},q_{2})}_{n,m}\left(|f(t,s)-f(x,y)|;x,y\right)\\
&\leq&B^{(p_{1},q_{1}),(p_{2},q_{2})}_{n,m}\left(|f(t,s)-f(x,s)|;x,y\right)+B^{(p_{1},q_{1}),(p_{2},q_{2})}_{n,m}\left(|f(x,s)-f(x,y)|;x,y\right)\\
&\leq&B^{(p_{1},q_{1}),(p_{2},q_{2})}_{n,m}\left(|\omega^{1}(f;
|t-x|)|;x,y\right)+B^{(p_{1},q_{1}),(p_{2},q_{2})}_{n,m}\left(|\omega^{2}(f;
|s-y|)|;x,y\right)\\
&\leq&\omega^{1}(f,\delta_{n})
\left[1+\frac{1}{\delta_{n}}B^{(p_{1},q_{1}),(p_{2},q_{2})}_{n,m}\left(|t-x|;x,y\right)\right]\\
&&+\omega^{2}(f,\delta_{m})
\left[1+\frac{1}{\delta_{m}}B^{(p_{1},q_{1}),(p_{2},q_{2})}_{n,m}\left(|s-y|;x,y\right)\right]\\
&\leq&\omega^{1}(f,\delta_{n})
\left[1+\frac{1}{\delta_{n}}\left(B^{(p_{1},q_{1}),(p_{2},q_{2})}_{n,m}\left((t-x)^{2};x,y\right)\right)^{1/2}\right]\\&&+\omega^{2}(f,\delta_{m})
\left[1+\frac{1}{\delta_{m}}\left(B^{(p_{1},q_{1}),(p_{2},q_{2})}_{n,m}\left((s-y)^{2};x,y\right)\right)^{1/2}\right].
\end{eqnarray*}
Consider \eqref{moment1}, \eqref{moment2} and choosing
\begin{eqnarray*}
\delta_{n}^2&=&\frac{p_{1}^{n-1}}{[n]_{p_{1},q_{1}}}(x-x^{2}),\\
\delta_{m}^2&=&\frac{p_{2}^{m-1}}{[m]_{p_{2},q_{2}}}(y-y^{2}).
\end{eqnarray*}
we reach the result.
\end{proof}

For $\alpha_{1},\alpha_{1}\in (0,1]$ and $(s,t),(x,y)\in [0,1]^{2}$,
we define the Lipschitz class $Lip M(\alpha_{1}, \alpha_{1})$ for
the bivariate case as follows:
\begin{eqnarray*}
\left|f(s,t)-f(x,y)\right|\leq
M\left|s-x\right|^{\alpha_{1}}\left|t-y\right|^{\alpha_{2}}.
\end{eqnarray*}

\begin{thm}
Let  $f \in Lip_{M}(\alpha_{1},\alpha_{2})$. Then, for all $(x,y)\in
[0,1]^{2}$, we have

\begin{equation*}
|B^{(p_{1},q_{1}),(p_{2},q_{2})}_{n,m}\left(f;x,y\right)-f(x,y)|\leq
M\delta_{n}^{\alpha_{1}/2}\delta_{m}^{\alpha_{2}/2},
\end{equation*}
where $\delta_{n}$ and $\delta_{m}$ defined in \eqref{lip1} and
\eqref{lip2}, respectively.
\end{thm}
\begin{proof}
As $f \in Lip_{M}(\alpha_{1},\alpha_{2})$, it follows
\begin{eqnarray*}
|B^{(p_{1},q_{1}),(p_{2},q_{2})}_{n,m}\left(f;x,y\right)-f(x,y)|&\leq&
B^{(p_{1},q_{1}),(p_{2},q_{2})}_{n,m}(|f(t,s)-f(x,y)|,q_{n};x,y)\\
&\leq&M
B^{(p_{1},q_{1}),(p_{2},q_{2})}_{n,m}(|t-x|^{\alpha_{1}}|s-y|^{\alpha_{2}};x,y)\\
&=&MB^{(p_{1},q_{1}),(p_{2},q_{2})}_{n,m}(|t-x|^{\alpha_{1}}|;x)B^{(p_{1},q_{1}),(p_{2},q_{2})}_{n,m}(|s-y|^{\alpha_{2}};y).
\end{eqnarray*}
For $\widehat{p}=\frac{1}{\alpha_{1}},
\widehat{q}=\frac{\alpha_{1}}{2-\alpha_{1}}$ and
$\widehat{p}=\frac{1}{\alpha_{2}},
\widehat{q}=\frac{\alpha_{2}}{2-\alpha_{2}}$ applying the
H\"{o}lder's inequality, we get
\begin{eqnarray*}
|B^{(p_{1},q_{1}),(p_{2},q_{2})}_{n,m}\left(f;x,y\right)-f(x,y)|
&\leq&M\{B^{(p_{1},q_{1}),(p_{2},q_{2})}_{n,m}(|t-x|^{2};x)\}^{\alpha_{1}/2}\{B^{(p_{1},q_{1}),(p_{2},q_{2})}_{n,m}(1;x)\}^{\alpha_{1}/2}\\
&&\times\{B^{(p_{1},q_{1}),(p_{2},q_{2})}_{n,m}(|s-y|^{2};y)\}^{\alpha_{12}/2}\{B^{(p_{1},q_{1}),(p_{2},q_{2})}_{n,m}(1;y)\}^{\alpha_{2}/2}\\
&=&M\delta_{n}^{\alpha_{1}/2}\delta_{m}^{\alpha_{2}/2}.
\end{eqnarray*}
Hence, we get desired the result. \end{proof}

\begin{thm}
Let $f\in C^{1}([0,1]^{2})$ and
$0<q_{1,n},q_{2,m}<p_{n,1},p_{m,2}\leq1$. Then, we have
\begin{eqnarray*}
|B^{(p_{1},q_{1}),(p_{2},q_{2})}_{n,m}\left(f;x,y\right)-f(x,y)|&\leq&\parallel
f^{'}_{x}\parallel_{C([0,1]^{2})}\delta_{n}+\parallel
f^{'}_{y}\parallel_{C([0,1]^{2})}\delta_{m}.
\end{eqnarray*}

\end{thm}
\begin{proof}
For $(t,s)\in [0,1]^{2}$, we obtain

\begin{eqnarray*}
f(t)-f(s)=\int_{x}^{t}f^{'}_{u}(u,s)du+\int_{y}^{s}f^{'}_{v}(x,v)du
\end{eqnarray*}
Applying the our operator on both sides above equation,  we deduce
\begin{eqnarray*}
|B^{(p_{1},q_{1}),(p_{2},q_{2})}_{n,m}\left(f;x,y\right)-f(x,y)|&\leq&
B^{(p_{1},q_{1}),(p_{2},q_{2})}_{n,m}\left(\left|\int_{x}^{t}f^{'}_{u}(u,s)du\right|;x,y\right)\\
&&+B^{(p_{1},q_{1}),(p_{2},q_{2})}_{n,m}\left(\left|\int_{y}^{s}f^{'}_{v}(x,v)du\right|;x,y\right).
\end{eqnarray*}
As
\begin{eqnarray*}
\left|\int_{x}^{t}f^{'}_{u}(u,s)du\right| \leq\parallel
f^{'}_{x}\parallel_{C([0,1]^{2})}|t-x|\,\,\textrm{and}\,\,\left|\int_{y}^{s}f^{'}_{v}(x,v)du\right|\leq
f^{'}_{y}\parallel_{C([0,1]^{2})}|s-y|,
\end{eqnarray*}
we have
\begin{eqnarray*}
|B^{(p_{1},q_{1}),(p_{2},q_{2})}_{n,m}\left(f;x,y\right)-f(x,y)|&\leq&f^{'}_{x}\parallel_{C([0,1]^{2})}B^{(p_{1},q_{1}),(p_{2},q_{2})}_{n,m}\left(|t-x|;x,y\right)\\
&&+f^{'}_{y}\parallel_{C([0,1]^{2})}B^{(p_{1},q_{1}),(p_{2},q_{2})}_{n,m}\left(|s-y|;x,y\right).
\end{eqnarray*}
Using the Cauchy-Schwarz inequality, we can write following
\begin{eqnarray*}
|B^{(p_{1},q_{1}),(p_{2},q_{2})}_{n,m}\left(f;x,y\right)-f(x,y)|&\leq&\parallel f^{'}_{x}\parallel_{C([0,1]^{2})}\{B^{(p_{1},q_{1}),(p_{2},q_{2})}_{n,m}\left((t-x)^{2};x,y\right)\}^{1/2}\{B^{(p_{1},q_{1}),(p_{2},q_{2})}_{n,m}\left(1;x,y\right)\}^{1/2}\\
&&+\parallel
f^{'}_{y}\parallel_{C([0,1]^{2})}\{B^{(p_{1},q_{1}),(p_{2},q_{2})}_{n,m}\left((s-y)^{2};x,y\right)\}^{1/2}\{B^{(p_{1},q_{1}),(p_{2},q_{2})}_{n,m}\left(1;x,y\right)\}^{1/2}.
\end{eqnarray*}
Form \eqref{moment1} and \eqref{moment2}, we get desired the result.
\end{proof}

\begin{thm}
Let $f\in C\left([ 0,1]^{2} \right)$, then we have
\begin{eqnarray*}
\left\|B^{(p_{1},q_{1}),(p_{2},q_{2})}_{n,m}\left(f;x,y\right)-f(x,y)\right\|_{C\left([
0,1]^{2} \right)}\leq2M\left(f;\delta_{n,m}(x,y)/2 \right),
\end{eqnarray*}
where
\begin{eqnarray*}
\delta_{n,m}(x,y)=\frac{1}{2}\max\left(\frac{p_{1}^{n-1}(x-x^{2})}{[n]_{p_{1},q_{1}}},\frac{p_{2}^{m-1}(y-y^{2}}{[m]_{p_{2},q_{2}}}\right).
\end{eqnarray*}
\end{thm}

\begin{proof}
Let $g \in C^{2}([0,1]^{2})$. By the Taylor's formula, we get
\begin{eqnarray*}
g(s_{1},s_{2})-g(x,y)&=&g(s_{1},y)-g(x,y)+g(s_{1},s_{2})-g(s_{1},y)\\
&=&\frac{\partial g(x,y)}{\partial
x}(s_{1}-x)+\int_{x}^{s_{1}}(s_{1}-u)\frac{\partial^{2}
g(u,y)}{\partial u^{2}}du\\
&&+\frac{\partial g(x,y)}{\partial
x}(s_{2}-y)+\int_{y}^{s_{2}}(s_{2}-v)\frac{\partial^{2}
g(x,v)}{\partial v^{2}}dv\\
&=&\frac{\partial g(x,y)}{\partial
x}(s_{1}-x)+\int_{0}^{s_{1}-x}(s_{1}-x-u)\frac{\partial^{2}
g(u,y)}{\partial u^{2}}du\\
&&+\frac{\partial g(x,y)}{\partial
x}(s_{2}-y)+\int_{0}^{s_{2}-y}(s_{2}-y-v)\frac{\partial^{2}
g(x,v)}{\partial v^{2}}dv
\end{eqnarray*}
Applying $B^{(p_{1},q_{1}),(p_{2},q_{2})}_{n,m}$ to the both sides
of the above equation, we obtain
\begin{eqnarray*}
\left|B^{(p_{1},q_{1}),(p_{2},q_{2})}_{n,m}\left(g;x,y\right)-g(x,y)\right|&\leq&\left|\frac{\partial
g(x,y)}{\partial x}\right|\left|B^{(p_{1},q_{1}),(p_{2},q_{2})}_{n,m}\left((s_{1}-x);x,y\right)\right|\\
&&+\left|B^{(p_{1},q_{1}),(p_{2},q_{2})}_{n,m}\left(\int_{0}^{s_{1}-x}(s_{1}-x-u)\frac{\partial^{2}
g(u,y)}{\partial u^{2}}du;x,y\right)\right|\\
&&+\left|\frac{\partial
g(x,y)}{\partial y}\right|\left|B^{(p_{1},q_{1}),(p_{2},q_{2})}_{n,m}\left((s_{2}-y);x,y\right)\right|\\
&&+\left|B^{(p_{1},q_{1}),(p_{2},q_{2})}_{n,m}\left(\int_{0}^{s_{2}-y}(s_{2}-y-v)\frac{\partial^{2}
g(v,x)}{\partial v^{2}}dv;x,y\right)\right|
\end{eqnarray*}
As
$B^{(p_{1},q_{1}),(p_{2},q_{2})}_{n,m}\left((s_{1}-x);x,y\right)=0$
and
$B^{(p_{1},q_{1}),(p_{2},q_{2})}_{n,m}\left((s_{2}-y);x,y\right)=0$,
one can write following
\begin{eqnarray*}
\left\|B^{(p_{1},q_{1}),(p_{2},q_{2})}_{n,m}\left(f;x,y\right)-f(x,y)\right\|_{C\left([
0,1]^{2} \right)}&\leq& \frac{1}{2}\left\|\frac{\partial
g(x,y)}{\partial x}\right\|_{C\left([ 0,1]^{2}
\right)}\left|B^{(p_{1},q_{1}),(p_{2},q_{2})}_{n,m}\left((s_{1}-x)^{2};x,y\right)\right|\\
&&+\frac{1}{2}\left\|\frac{\partial g(x,y)}{\partial
y}\right\|_{C\left([ 0,1]^{2}
\right)}\left|B^{(p_{1},q_{1}),(p_{2},q_{2})}_{n,m}\left((s_{2}-y)^{2};x,y\right)\right|.
\end{eqnarray*}
By \eqref{moment1}, \eqref{moment2}, we deduce,

\begin{eqnarray}\label{ali1}
\left\|B^{(p_{1},q_{1}),(p_{2},q_{2})}_{n,m}\left(f;x,y\right)-f(x,y)\right\|_{C\left([
0,1]^{2}
\right)}&\leq&\frac{1}{2}\max\left(\frac{p_{1}^{n-1}(x-x^{2})}{[n]_{p_{1},q_{1}}},\frac{p_{2}^{m-1}(y-y^{2}}{[m]_{p_{2},q_{2}}}\right)\nonumber\\
&&\times\left[\left\|\frac{\partial g(x,y)}{\partial
x}\right\|_{C\left([ 0,1]^{2} \right)}+\left\|\frac{\partial
g(x,y)}{\partial x}\right\|_{C\left([ 0,1]^{2} \right)}\right]\nonumber\\
&\leq&\parallel g\parallel_{C\left([ 0,1]^{2} \right)}\delta_{n,m}.
\end{eqnarray}
By the linearity $B^{(p_{1},q_{1}),(p_{2},q_{2})}_{n,m}$, we obtain
\begin{eqnarray}\label{ali2}
\left\|B^{(p_{1},q_{1}),(p_{2},q_{2})}_{n,m}\left(f;x,y\right)-f(x,y)\right\|_{C\left([
0,1]^{2}
\right)}&\leq&\left\|B^{(p_{1},q_{1}),(p_{2},q_{2})}_{n,m}f-B^{(p_{1},q_{1}),(p_{2},q_{2})}_{n,m}g\right\|_{C\left([
0,1]^{2}
\right)}\nonumber\\&&+\left\|B^{(p_{1},q_{1}),(p_{2},q_{2})}_{n,m}g-g\right\|_{C\left([
0,1]^{2} \right)}+\left\|f-g\right\|_{C\left([ 0,1]^{2} \right)}.
\end{eqnarray}
By \eqref{ali1} and \eqref{ali2}, one can see that
\begin{eqnarray*}
\left\|B^{(p_{1},q_{1}),(p_{2},q_{2})}_{n,m}\left(f;x,y\right)-f(x,y)\right\|_{C\left([
0,1]^{2} \right)}\leq2M\left(f;\delta_{n,m}(x,y)/2 \right).
\end{eqnarray*}
This step completes the proof.
\end{proof}

First, we need the auxiliary result contained in the following
lemma.

\begin{lem}\label{lemmaVAR} Let  $0<q_{n}< p_{n}\leq1$,  be sequence such that $p_{n},q_{n}\longrightarrow 1$ and
$p^{n}_{n}\longrightarrow a$, $q^{n}_{n}\longrightarrow b$ as
$n\longrightarrow\infty$. Then, we have the following limits:
\begin{enumerate}
\item[(i)]$\lim_{n \to \infty} [n]_{q_{n}}B^{(p_{n},q_{n})}_{n}((t-x)^{2};x)=ax-ax^{2}$
\item[(ii)]$\lim_{n \to \infty} [n]_{q_{n}}^{2} B^{(p_{n},q_{n})}_{n}((t-x)^{4};x)=3ax^{4}-6ax^{3}+3ax^{2}$.
\end{enumerate}
\end{lem}
\begin{proof}
From \eqref{moment1}, we get
\begin{eqnarray*}
[n]_{p_{n},q_{n}}
B^{(p_{n},q_{n})}_{n}((t-x)^{2};x)=-p_{n}^{n-1}x^{2}+xp_{n}^{n-1}.
\end{eqnarray*}
Let us take the limit of both sides of the above equality as
$n\longrightarrow \infty$, then we have
\begin{eqnarray*}
\lim_{n \to \infty}[n]_{p_{n},q_{n}}
\left\{B^{(p_{n},q_{n})}_{n}((t-x)^{2},x)\right\}&=&\lim_{n \to
\infty}
\left\{-p_{n}^{n-1}x^{2}+xp_{n}^{n-1}\right\}\\
&=&a(x-x^{2}).
\end{eqnarray*}

(ii) Using Lemma \ref{lemma1},  Lemma \ref{lemma2}  and by the
linearity of the operators $B^{(p_{n},q_{n})}_{n}(f;x)$, we obtain
\begin{eqnarray*} \
B^{(p_{n},q_{n})}_{n}(t-x)^{4};x)=A_{1,n}x^{4}+A_{2,n}x^{3}+A_{3,n}x^{2}+A_{4,n}x
\end{eqnarray*}
where
\begin{eqnarray*}
A_{1,n}&=&\frac{p_{n}^{n-3}[n]^{2}_{p_{n},q_{n}}(-p_{n}^{2}+2p_{n}q_{n}-q_{n}^{2})+p^{n-5}[n]_{p_{n},q_{n}}(-p_{n}^{3}+3p_{n}q_{n}^{2}+
q_{n}^{3})-p_{n}^{3n-6}(p_{n}^{2}+p_{n}^{3}+2p_{n}q_{n}^{2}+q_{n}^{3})}{[n]^{3}_{p_{n},q_{n}}}\\
A_{2,n}&=&\frac{p_{n}^{n-3}[n]^{2}_{p_{n},q_{n}}(p_{n}^{2}-2p_{n}q_{n}+q_{n}^{2})}{[n]^{3}_{p_{n},q_{n}}}\\
&&+\frac{p^{2n-5}[n]_{p_{n},q_{n}}(-q_{n}^{3}-4p_{n}q_{n}^{2}-3p^{2}_{n}q_{n}+2p_{n}^{3})-p_{n}^{3n-6}(3p_{n}^{3}+3p_{n}q_{n}^{2}+5p^{2}_{n}q_{n}+q_{n}^{3})}{[n]^{3}_{p_{n},q_{n}}}\\
A_{3,n}&=&\frac{p_{n}^{2n-4}[n]_{p_{n},q_{n}}(-p_{n}^{2}+3p_{n}q_{n}+q_{n}^{2})-p_{n}^{3n-5}(3p_{n}^{2}+q_{n}^{2}+3p_{n}q_{n})}{[n]^{3}_{q_{n}}}\\
A_{4,n}&=&\frac{p^{3n-3}}{[n]^{3}_{q_{n}}},\\
\end{eqnarray*}

It is clear that
\begin{eqnarray}\label{ali1}
\lim_{n \to \infty} [n]_{q_{n}}^{2}\{A_{4,n}x\}=0.
\end{eqnarray}

Taking the limit of both sides of $A_{1,n}$, we get
\begin{eqnarray}\label{ali2}
\lim_{n\to\infty}[n]_{q_{n}}^{2}\{A_{1,n}\}&=&\lim_{n\to\infty}\left\{-p_{n}^{n-3}[n]_{p_{n},q_{n}}(p_{n}-q_{n})^{2}
+p_{n}^{n-5}(-p_{n}^{3}+3p_{n}q_{n}^{2}+q_{n}^{3})-\frac{p_{n}^{3n-6}(p_{n}^{2}+p_{n}^{3}+2p_{n}q_{n}^{2}+q_{n}^{3})}{[n]_{q_{n}}}\right\}\nonumber\\
&=&\lim_{n \to
\infty}\left\{-p_{n}^{n-3}(p^{n}_{n}-q^{n}_{n})(p_{n}-q_{n})+p_{n}^{n-5}(-p_{n}^{3}+3p_{n}q_{n}^{2}+
q_{n}^{3})-\frac{p_{n}^{3n-6}(p_{n}^{2}+p_{n}^{3}+2p_{n}q_{n}^{2}+q_{n}^{3})}{[n]_{q_{n}}}\right\}\nonumber\\
&=&3a.
\end{eqnarray}
Similarly, we can show that;

\begin{eqnarray}\label{ali4}
\lim_{n\to\infty}[n]_{q_{n}}^{2}\{A_{2,n}\}=-6a \,\,\textrm{and}\,\,
\lim_{n\to\infty}[n]_{q_{n}}^{2}\{A_{3,n}\}=3a.
\end{eqnarray}

By combining (\ref{ali1})-(\ref{ali4}), we reach the desired the
result.
\end{proof}

Now, we give a Voronovskaja type theorem for
$B^{(p_{n},q_{n})}_{n,n}(f;x,y)$.

\begin{thm}
Let $f\in C^{2}([0,1]\times[0,1])$. Then, we have
\begin{eqnarray*}
\lim_{n\to\infty}[n]_{p_{n},q_{n}}B^{(p_{n},q_{n})}_{n,n}(f;x,y)-f(x_,y))&=&\frac{(ax-ax^{2})f_{x^{2}}^{\prime\prime}(x,y)}{2}+\frac{(ay-ay^{2})f_{y^{2}}^{\prime\prime}(x,y)}{2}.
\end{eqnarray*}
\end{thm}

\begin{proof}
Let $(x, y)\in [0, 1]\times[0, 1]$. Then, write Taylor's expansion
of $f$ as follows:
\begin{eqnarray}\label{taylor}
f(s,t)&=&f(x,y)+f_{x}^{\prime}(s-x)+f^{\prime}_{y}(t-y)\nonumber\\
&&+\frac{1}{2}\left\{f_{x}^{\prime\prime}(t-x)^{2}+2f_{xy}^{\prime}(s-x)(t-y)+f^{\prime\prime}_{y}(t-y)^{2}\right\}+\varepsilon(s,t)\left((s-x)^{2}+(t-y)^{2}\right)
\end{eqnarray}
where $(s,t)\in [0,1]^{2}$  and $\varepsilon(s,t)\longrightarrow 0$
as $(s,t) \longrightarrow (x,y)$.

Applying  the operator $B^{(p_{n},q_{n})}_{n,n}(f;.)$ on
\eqref{taylor}, we get
\begin{eqnarray*}
B^{(p_{n},q_{n})}_{n,n}(f;s,t)-f(x_,y)&=&f_{x}^{\prime}(x,y)B^{(p_{n},q_{n})}_{n,n}((s-x);x,y)+f_{y}^{\prime}(x,y)B^{(p_{n},q_{n})}_{n,n}((t-y);x,y)\\
&&+\frac{1}{2}\bigg\{f_{x^{2}}^{\prime\prime}B^{(p_{n},q_{n})}_{n,n}((t-x_{0})^{2};x,y)+
2f_{xy}^{\prime}B^{(p_{n},q_{n})}_{n,n}((s-x)(t-y);x,y)\\&&+f^{\prime\prime}_{y^{2}}B^{(p_{n},q_{n})}_{n,n}((t-y)^{2};x,y)\bigg\}
+B^{(p_{n},q_{n})}_{n,n}\left(\varepsilon(s,t)\left((s-x)^{2}+(t-y)^{2}\right);x,y\right).
\end{eqnarray*}
Let us take the limit of both sides of the above equality as
$n\longrightarrow \infty$,
\end{proof}
\begin{eqnarray*}
\lim_{n \to
\infty}[n]_{p_{n},q_{n}}B^{(p_{n},q_{n})}_{n,n}(f;s,t)-f(x,y))&=&\lim_{n
\to\infty}[n]_{p_{n},q_{n}}\frac{1}{2}\bigg\{f_{x^{2}}^{\prime\prime}B^{(p_{n},q_{n})}_{n,n}((t-x)^{2};x,y)\\&&+
2f_{xy}^{\prime}B^{(p_{n},q_{n})}_{n,n}((s-x)(t-y);x,y)+f^{\prime\prime}_{y^{2}}B^{(p_{n},q_{n})}_{n,n}((t-y)^{2};x,y)\bigg\}
\\&&+\lim_{n \to\infty}[n]_{p_{n},q_{n}}B^{(p_{n},q_{n})}_{n,n}\left(\varepsilon(s,t)\left((s-x)^{2}+(t-y)^{2}\right);x,y\right).
\end{eqnarray*}
For the last term on the right hand side, using Cauchy-Schwartz
inequality, we get
\begin{eqnarray*}
B^{(p_{n},q_{n})}_{n,n}\left(\varepsilon(s,t)\left((s-x)^{2}+(t-y)^{2}\right);x,y\right)&\leq&
\sqrt{\lim_{n \to
\infty}B^{(p_{n},q_{n})}_{n,n}\left(\varepsilon^{2}(s,t);x,y\right)}\\
&\times&\sqrt{2\lim_{n
\to\infty}[n]^{2}_{p_{n},q_{n}}B^{(p_{n},q_{n})}_{n,n}\left(\varepsilon(s,t)\left((s-x)^{4}+(t-y)^{4}\right);x,y\right)}.
\end{eqnarray*}
As $\lim_{n \to
\infty}B^{(p_{n},q_{n})}_{n,n}\left(\varepsilon^{2}(s,t);x,y\right)=\varepsilon^{2}(x,y)=0$
and using Lemma \ref{lemmaVAR}(ii)

$\lim_{n\to\infty}[n]^{2}_{p_{n},q_{n}}B^{(p_{n},q_{n})}_{n,n}\left((s-x)^{4}+(t-y)^{4});x,y\right)$
is finite, then we obtain
\begin{eqnarray*}
\lim_{n
\to\infty}[n]^{2}_{p_{n},q_{n}}B^{(p_{n},q_{n})}_{n,n}\left(\varepsilon(s,t)\left((s-x)^{4}+(t-y)^{4}\right);x,y\right)=0.
\end{eqnarray*}
Hence, one can see that

\begin{eqnarray*}
\lim_{n\to\infty}[n]_{p_{n},q_{n}}B^{(p_{n},q_{n})}_{n,n}(f;x,y)-f(x_,y))&=&\frac{(ax-ax^{2})f_{x^{2}}^{\prime\prime}(x,y)}{2}+\frac{(ay-ay^{2})f_{y^{2}}^{\prime\prime}(x,y)}{2}.
\end{eqnarray*}
This step completes the proof.

\end{proof}

\end{document}